\documentclass[preprint,12pt]{elsarticle}

\usepackage[T1]{fontenc}
\usepackage[utf8]{inputenc}
\usepackage{lmodern}
\usepackage{amsmath,amssymb,amsthm,mathtools}
\usepackage{microtype}
\usepackage{hyperref}
\hypersetup{hidelinks}

\newtheorem{theorem}{Theorem}[section]
\newtheorem{lemma}[theorem]{Lemma}
\newtheorem{proposition}[theorem]{Proposition}

\theoremstyle{remark}
\newtheorem{remark}[theorem]{Remark}

\newcommand{\F}{\mathbb F}
\newcommand{\PP}{\mathbb P}
\newcommand{\G}{\Gamma}
\newcommand{\Gs}{\Gamma^{s}}
\newcommand{\diam}{\operatorname{diam}}
\newcommand{\dist}{\operatorname{dist}}

\journal{Finite Fields and Their Applications}

\begin{document}

\begin{frontmatter}

\title{A note on the diameter of graphs of two-dimensional simplex codes}

\author{Artur Siemaszko}
\ead{artur@uwm.edu.pl}
\affiliation{organization={Institute of Mathematics, University of Warmia and Mazury in Olsztyn},
                  addressline={S\l oneczna 54},
                  city={Olsztyn},
                  postcode={10-710},
                  country={Poland}}

\begin{abstract}
Let $\Gs(2,q)$ be the graph induced in the Grassmann graph by the $q$-ary simplex codes of dimension $2$. For $q=4$, this graph is known to have diameter $3$. We prove that the same diameter occurs for every prime power $q\ge5$. Together with the elementary cases $q=2,3$, this gives $\diam\Gs(2,q)=0,2,3$ for $q=2$, $q=3$, and $q\ge4$, respectively. The upper bound is obtained from a consequence of a theorem of Marshall Hall on finite abelian groups. For $q\ge5$ an explicit diagonal pair of simplex lines gives the matching lower bound; we give two proofs, one using moments and one using products. For $q\ge7$ we also retain an independent counting proof. The case $q=4$ is handled separately.
\end{abstract}

\begin{keyword}
$q$-ary simplex code \sep Grassmann graph \sep graph diameter \sep finite geometry \sep Marshall Hall theorem
\MSC[2020] 51E22 \sep 05C12 \sep 94B27
\end{keyword}

\end{frontmatter}

\section{Introduction}
Let $V$ be an $n$-dimensional vector space over $\F_q$. The Grassmann graph $\G_k(V)$ has the $k$-dimensional subspaces of $V$ as vertices; two distinct vertices are adjacent when their intersection has dimension $k-1$. Thus one may equally regard its vertices as $q$-ary linear $[n,k]$ codes, with adjacency corresponding to the largest possible intersection of two distinct codes. Induced subgraphs obtained by restricting to non-degenerate, projective, simplex, or other distinguished classes of codes have been studied in a series of papers; see, for example, \cite{KP2016,KPP2018,Pankov2023}.

A $q$-ary simplex code of dimension $k$ has length $[k]_q=(q^k-1)/(q-1)$, and every non-zero codeword has Hamming weight $q^{k-1}$.

Following Kwiatkowski and Pankov \cite{KP2024}, we use $\Gs(k,q)$ for the subgraph induced by the simplex codes in the corresponding Grassmann graph. They proved that this graph is connected and pointed out that the distance problem is not understood in general.

We are concerned with $k=2$, where the ambient space is $V=\F_q^{q+1}$. In projective language, a \emph{simplex point} is a point of $\PP(V)$ represented by a vector with exactly one zero coordinate, and a \emph{simplex line} is a projective line corresponding to a two-dimensional simplex code. Hence $\Gs(2,q)$ is simply the intersection graph of simplex lines: two vertices are adjacent precisely when the corresponding lines meet in a simplex point. This viewpoint is closely related to the Grassmann graphs of classes of linear codes studied by Cardinali, Giuzzi and Kwiatkowski \cite{CGK2021}; for the point-line geometry attached specifically to simplex codes, see \cite{KPT2026}.

The small cases are known. The graph $\Gs(2,2)$ has one vertex, while $\Gs(2,3)$ is isomorphic to $K_{4,4}$. For $q=4$, Kwiatkowski and Pankov determined the complete distance relation; it follows from their description that the diameter is $3$ \cite{KP2020}. Later work on simplex-code graphs focused mainly on maximal cliques and on the associated point-line geometry \cite{KP2024,KPT2026}. In a broader coding-theoretic setting, Cardinali and Giuzzi introduced and studied Grassmannian point-line geometries of classes of linear codes and their collinearity graphs \cite{CG2024}. These works do not determine the diameter of $\Gs(2,q)$ for a uniform range $q\ge5$.

The present state of the subject is summarized in Pankov's recent notes \cite[Section~10.3]{PankovNotes2026}. The cases $\Gs(2,2)$, $\Gs(2,3)$ and $\Gs(2,4)$ are singled out there, while the distance problem for simplex-code graphs is left open in general (Problem~10.11). Our purpose is narrower than determining the full distance relation: we determine the diameter for every two-dimensional simplex-code graph.

\begin{theorem}\label{thm:main}
For every prime power $q$,
\[
\diam \Gs(2,q)=
\begin{cases}
0,&q=2,\\[1mm]
2,&q=3,\\[1mm]
3,&q\ge4.
\end{cases}
\]
\end{theorem}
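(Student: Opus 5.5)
The cases $q=2,3$ are handled directly: $\Gs(2,2)$ has a single vertex, and $\Gs(2,3)\cong K_{4,4}$ has diameter $2$. For $q=4$ the value $3$ follows from the distance description of Kwiatkowski and Pankov \cite{KP2020}. It therefore remains to prove $\diam\Gs(2,q)\le 3$ and $\diam\Gs(2,q)\ge 3$ for $q\ge5$. The upper-bound argument below works for all $q\ge3$.

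\emph{Reformulation.} Let $x,y$ be simplex points with zero coordinates at positions $i,j$. The line $\langle x,y\rangle$ is a simplex line if and only if $i\ne j$ and the map
\[
k\mapsto y_k/x_k\in\F_q\cup\{\infty\},
\]
taken over all $q+1$ positions, is a bijection onto $\PP^1(\F_q)$. The reason is that $ax+by$ vanishes at $k$ exactly when $-a/b=y_k/x_k$. Two further facts follow from this. First, a simplex line contains exactly one point whose zero sits in each prescribed position. Second, for the upper bound it suffices to connect two disjoint, non-adjacent lines $L,M$ by a path of the form $L\ni x,\ z,\ y\in M$, where $\langle x,z\rangle$ and $\langle z,y\rangle$ are simplex lines.

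\emph{Upper bound.} I would fix a position $i$ and take $x\in L$, $y\in M$ to be the unique points with zero at $i$. I then look for a simplex point $z$ with zero at some $m\ne i$. Writing $z_k=b_kx_k=c_ky_k$ for $k\notin\{i,m\}$, the reformulation shows that both lines are simplex lines exactly when $(b_k)$ and $(c_k)$ each run bijectively through $\F_q^*$ on these $q-1$ positions. Equivalently, the sequence $a_k=y_k/x_k$ must be a quotient of two permutations of the cyclic group $\F_q^*$ of order $q-1$. The multiplicative form of M.~Hall's theorem says this happens precisely when $\prod_{k\ne i,m}a_k=1$, that is, when
\[
a_m=\prod_{k\ne i}a_k .
\]
So it remains to choose $i$, and then $m$, so that some $a_m$ equals this total product. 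I expect this to be the main obstacle. My plan is to compute the product of the nonzero coordinates of the point of a simplex line with zero at $i$; up to the column scalars this is a product of determinants, $\pm\prod_{k\ne i}\lambda_k\det(v_i,v_k)$. From this I would show that $\prod_{k\ne i}a_k$ depends on $i$ in a controlled way. Since the $a_k$ cannot all avoid the required value for every one of the $q+1$ choices of $i$, a suitable $m$ exists.

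\emph{Lower bound for $q\ge5$.} I would take $L$ to be the standard simplex line, with coordinates $x_k=f(P_k)$ indexed by $\PP^1(\F_q)$, and set $M=DL$ for an explicit diagonal matrix $D=\operatorname{diag}(d_k)$. A natural first choice is $d_t=t^2$ on affine $t$, suitably adjusted at $0$ and $\infty$. The first task is to check that $L\cap M=0$, so that $L$ and $M$ are not adjacent. The second is to rule out distance $2$: for every $x\in L$ and $y\in M$, the ratio map $k\mapsto y_k/x_k$ must fail to be a bijection. For the remaining coordinates this ratio is an explicit rational function of $t$ of low degree. If it were a bijection onto $\F_q^*$, then all power sums $\sum r^{s}$ with $1\le s\le q-2$ would vanish. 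I would expand one low moment, say $s=1$ or $s=2$, and show it is nonzero whenever $q\ge5$. The small-degree bound is exactly where $q\ge5$ enters; the same condition is what makes $q=4$ need separate treatment. Combining this lower bound with the upper bound gives Theorem~\ref{thm:main}.
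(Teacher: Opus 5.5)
\textbf{The upper bound.} The step you flag as the main obstacle actually fails, and the approach cannot be rescued as set up. Forcing $x\in L$ and $y\in M$ to share the zero position $i$ imposes the Hall product condition $\prod_{k\ne i,m}a_k=1$. There are disjoint lines for which this condition fails for every $i$ and every $m$.

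Let $q\ge5$, let $\xi$ generate $\F_q^\times$, and let $M=D(L)$, where $D$ is diagonal, multiplies three coordinates by $\xi$ and fixes the others. Since $D$ preserves zero positions, the point of $M$ with zero at $i$ is $D$ applied to the point of $L$ with zero at $i$. Hence $a_k=d_k\in\{1,\xi\}$ and $\prod_{k\ne i}a_k=\xi^3/d_i\in\{\xi^2,\xi^3\}$. This is never in $\{1,\xi\}$, because $\xi$ has order at least $4$. This $L,M$ is exactly the pair at distance $3$ used for the lower bound. So your claim that the $a_k$ cannot avoid the required value for all $q+1$ choices of $i$ is false precisely where it is needed.

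The repair, which is the paper's Bridge Lemma, is to choose $x\in L$ and $y\in M$ with \emph{different} zero positions $i\ne j$, and to put the zero of $z$ at a third position $m$. The two bijectivity requirements then overlap only on the $q-2$ positions of $I\setminus\{i,j,m\}$. The leftover value of $z_k/x_k$ is placed at position $j$, where $y$ vanishes, and the leftover value of $z_k/y_k$ at position $i$, where $x$ vanishes. So you only need the partial form of Hall's theorem ($q-2$ prescribed quotients in a group of order $q-1$), which carries no product condition.

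\textbf{The lower bound.} This part is only a plan, and its mechanism does not work for the matrix you suggest. Take $L$ with points $A(t)=((t-a)_{a\in\F_q},1)$ and $A(\infty)=(1,\dots,1,0)$, and set $d_t=t^2$ for $t\in\F_q^\times$. For distinct $i,j\in\F_q^\times$, a direct computation gives
\[
M_1(A(i),DA(j))=2ij-3i^2+d_0j/i+d_\infty .
\]
This is affine in $j$ and generally has a root, so no single low moment excludes all pairs. Worse, take $q=5$ and $d_0=d_\infty=1$. The ratios of $DA(4)$ to $A(1)$ at positions $0,2,3,\infty$ are $4,2,3,1$. So $A(1)$ and $DA(4)$ span a simplex line, and $L,M$ have a common neighbor. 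The ratios $k^2(j-k)/(i-k)$ form a cubic rational function of $k$, and nothing structural prevents them from permuting $\F_q^\times$.

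The missing idea is to perturb $L$ minimally: scale only a three-element set $S$ of coordinates by the generator $\xi$. Then $M_m(A(i),DA(j))=(\xi^m-1)\sum_{k\in S\setminus\{i,j\}}u_k^m$, with at most three distinct non-zero $u_k$. A Vandermonde argument with $m\le 3\le q-2$ then yields a non-zero moment. Alternatively, the $q-1$ ratios would have to have product $-1$. But the identity $\prod_{a\ne x}(x-a)=-1$ shows their product is $-\xi^3/(d_id_j)\ne-1$. This is the same product obstruction that defeats your same-zero-position bridge.
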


The new part of Theorem~\ref{thm:main} is the assertion for $q\ge5$. We include $q=2,3,4$ both for completeness and to keep the paper self-contained; the argument for $q=4$ is a short alternative proof of the diameter consequence of the distance description in \cite{KP2020}. To the best of our knowledge, the diameter had not previously been determined for any uniform range $q\ge5$.

The proof is short once the right intermediate statement is isolated. A consequence of Marshall Hall's theorem produces, from any two simplex points with different zero coordinates, a third simplex point collinear with both. This gives a path of length at most $3$ between arbitrary simplex lines. For $q\ge5$ we construct an explicit diagonal pair of simplex lines and rule out a common neighbor in two independent ways, first by moments and then by a product argument. For $q\ge7$ we also retain the independent counting proof based on the number of vertices and the valency. The case $q=4$ is treated separately.

As an immediate consequence, for $q\ge4$ the graph $\Gs(2,q)$ is not isometrically embedded in its ambient Grassmann graph, whose diameter is $2$. Thus the two-dimensional case agrees with the non-isometric behaviour expected in the general discussion of simplex-code graphs.

\section{Simplex points, simplex lines, and the upper bound}
Throughout the paper, $V=\F_q^{q+1}$ and the coordinate positions are indexed by $I=\{0,1,\dots,q\}$.
For a simplex point $P=\langle p\rangle=\langle p_0,\dots,p_q\rangle$, let $z(P)$ denote the unique position at which $p$ is zero.

We shall repeatedly use the following elementary criterion.

\begin{lemma}[Ratio criterion]\label{lem:ratio}
Let $P=\langle p\rangle$ and $Q=\langle q\rangle$ be distinct simplex points. Then $P$ and $Q$ lie on a simplex line if and only if $z(P)\ne z(Q)$ and, with $a=z(P)$ and $b=z(Q)$,
\[
\left\{\frac{q_i}{p_i}: i\in I\setminus\{a,b\}\right\}=\F_q^\times.
\]
\end{lemma}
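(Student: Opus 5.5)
The plan is to first describe a simplex line concretely and then test when $P$ and $Q$ both lie on it. A two-dimensional simplex code is, up to equivalence, the code with generator matrix whose $q+1$ columns represent the $q+1$ distinct points of $\PP(\F_q^2)$, each column possibly rescaled by a nonzero scalar. So I will use the following description of simplex lines. A line $L=\langle u,v\rangle\subset V$ is simplex if and only if the column pairs $(u_i,v_i)$, $i\in I$, are nonzero and pairwise non-proportional. Equivalently, they represent each point of $\PP(\F_q^2)$ exactly once. This is the standard characterization: a code is simplex precisely when it is projective and of full length $[2]_q$. One also checks that every nonzero codeword then has weight $q$, because each functional vanishes on exactly one projective point.

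For the forward direction, let $L$ be a simplex line containing $P$ and $Q$, and take $u=p$, $v=q$ as a basis of $L$. The columns $(p_i,q_i)$ must be nonzero and pairwise non-proportional.
\begin{itemize}
\item If $z(P)=z(Q)=a$, then column $a$ is $(0,0)$. This is a contradiction, so $z(P)\neq z(Q)$.
\item Set $a=z(P)$ and $b=z(Q)$. Column $a$ is $(0,q_a)$ with $q_a\ne0$, and column $b$ is $(p_b,0)$ with $p_b\neq0$. These represent the projective points $(0:1)$ and $(1:0)$.
\item The remaining $q-1$ columns, indexed by $I\setminus\{a,b\}$, have both entries nonzero. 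They represent points $(1:q_i/p_i)$, and these must be pairwise distinct.
\end{itemize}
So the $q-1$ ratios $q_i/p_i$ are distinct elements of $\F_q^\times$, hence they fill all of $\F_q^\times$.

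For the converse, assume $z(P)\neq z(Q)$ and that the ratios exhaust $\F_q^\times$. Then $p$ and $q$ are linearly independent, since otherwise the zero positions would coincide. The columns of $\langle p,q\rangle$ are then $(0:1)$, $(1:0)$, and $(1:c)$ for each $c\in\F_q^\times$ exactly once. These are all $q+1$ points of $\PP(\F_q^2)$, so $\langle p,q\rangle$ is a simplex line through $P$ and $Q$.

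The main point needing care is the characterization of simplex lines as those whose columns form a complete set of representatives of $\PP(\F_q^2)$. If this is not taken as known, I will derive it from the weight condition. Every nonzero codeword $\alpha u+\beta v$ must have weight $q$, so it vanishes at exactly one position. Hence each projective point $(\beta:-\alpha)$ contains exactly one column, which gives the bijection with $\PP(\F_q^2)$. Basis independence is automatic, since changing the basis applies a fixed invertible map to all columns.
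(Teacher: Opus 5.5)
Your proposal is correct and is essentially the paper's argument: the paper records where each coordinate function $\lambda p_i+\mu q_i$ vanishes on the pencil (at $[-q_i:p_i]$) and requires these $q+1$ zeros to be distinct, whereas you track the column $(p_i:q_i)$ itself, and the two conditions correspond under $[x:y]\mapsto[-y:x]$ on $\PP^1(\F_q)$. Your derivation of the column characterization from the weight-$q$ condition simply spells out a step the paper leaves implicit.
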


\begin{proof}
The points of the projective line $\langle P,Q\rangle$ have representatives $\lambda p+\mu q$, with $[\lambda:\mu]\in\PP^1(\F_q)$. Its $i$th coordinate vanishes at the parameter $[-q_i:p_i]$. Thus $\langle P,Q\rangle$ is a simplex line exactly when the $q+1$ coordinate functions vanish at $q+1$ distinct points of $\PP^1(\F_q)$. The two zero positions of $P$ and $Q$ must therefore be different, and after deleting them the remaining $q-1$ ratios are precisely the elements of $\F_q^\times$. The converse is the same argument in reverse.
\end{proof}

In particular, the $q+1$ points of a simplex line have pairwise distinct zero positions; hence each coordinate position occurs exactly once.

\begin{lemma}\label{lem:common-neighbor}
Let $L$ and $M$ be disjoint simplex lines. They have a common neighbor in $\Gs(2,q)$ if and only if there are points $P\in L$ and $Q\in M$ lying on a simplex line.
\end{lemma}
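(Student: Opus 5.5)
Both directions follow from the definitions: adjacency in $\Gs(2,q)$ means the two simplex lines meet in a point, and that point is necessarily a simplex point because it lies on a simplex line.

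For the forward direction, suppose $N$ is a simplex line adjacent to both $L$ and $M$. Then $N\cap L$ is a single point $P$ and $N\cap M$ is a single point $Q$. Since $L$ and $M$ are disjoint, $P\ne Q$. Both points lie on the simplex line $N$, so $P\in L$ and $Q\in M$ lie on a simplex line, namely $N$.

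For the converse, let $P\in L$ and $Q\in M$ lie on a simplex line $N$. Disjointness of $L$ and $M$ gives $P\ne Q$, so $N=\langle P,Q\rangle$ is uniquely determined. It remains to show that $N$ is a vertex distinct from $L$ and $M$ and adjacent to both.
\begin{itemize}
\item $N\ne L$: otherwise $Q\in L\cap M$, contradicting disjointness. Symmetrically, $N\ne M$.
\item $N$ meets $L$ at $P$. Two distinct projective lines meet in at most one point, so $N\cap L=\{P\}$, i.e.\ $\dim(N\cap L)=1$ as subspaces.
\item $P$ is a simplex point, since every point of a simplex line is one. By Lemma~\ref{lem:ratio} and the remark after it, each point of a simplex line has exactly one zero coordinate.
\end{itemize}
Hence $N$ and $L$ are adjacent, and the same argument gives adjacency of $N$ and $M$. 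So $N$ is a common neighbor.

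The only step needing care is distinctness: the common neighbor must differ from $L$ and $M$, which is exactly where disjointness is used. Nothing else is involved.
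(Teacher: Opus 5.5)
Your proof is correct and is the paper's argument. The forward direction takes $P=N\cap L$ and $Q=N\cap M$, which are distinct by disjointness. For the converse, which the paper calls immediate, you check, as it does, that $N=\langle P,Q\rangle$ differs from $L$ and $M$ and meets each of them in exactly one point.
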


\begin{proof}
If a simplex line $N$ is adjacent to both $L$ and $M$, then $P=N\cap L$ and $Q=N\cap M$ are distinct and $N=\langle P,Q\rangle$. The converse is immediate.
\end{proof}

\paragraph{The bridge argument}
We use one consequence of a theorem of Marshall Hall \cite{Hall1952}. In multiplicative notation, Hall's theorem says that if $A$ is a finite abelian group of order $n$ and $b_1\cdots b_n=1$, then the elements of $A$ can be ordered as $x_1,\dots,x_n$ in such a way that $x_1b_1,\dots,x_nb_n$ are again all the elements of $A$.

\begin{lemma}[Partial Hall theorem]\label{lem:hall}
Let $A$ be a finite abelian group of order $n$, and let $b_1,\dots,b_{n-1}\in A$. There are pairwise distinct $x_1,\dots,x_{n-1}\in A$ for which $x_1b_1,\dots,x_{n-1}b_{n-1}$ are pairwise distinct.
\end{lemma}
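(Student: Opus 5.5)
The plan is to extend the sequence $b_1,\dots,b_{n-1}$ by one element so that Hall's theorem applies, and then discard the extra index. Put
\[
b_n=(b_1b_2\cdots b_{n-1})^{-1}\in A,
\]
so that $b_1\cdots b_n=1$ holds by construction. This is the only place where the abelian hypothesis enters beyond its use in Hall's theorem itself, since commutativity makes the product independent of the order of the factors.

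Now apply Hall's theorem \cite{Hall1952} to the full sequence $b_1,\dots,b_n$. It gives an ordering $x_1,\dots,x_n$ of all elements of $A$ such that $x_1b_1,\dots,x_nb_n$ is again an enumeration of $A$. In particular, the $x_i$ are pairwise distinct, and so are the products $x_ib_i$.

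Keep only the first $n-1$ indices. The elements $x_1,\dots,x_{n-1}$ remain pairwise distinct, and the products $x_1b_1,\dots,x_{n-1}b_{n-1}$ remain pairwise distinct, because both lists are sublists of lists without repetitions. This is exactly the conclusion of the lemma.

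There is no real obstacle once Hall's theorem is taken as given; the only point needing care is choosing $b_n$ so that the product condition holds. If one wanted to avoid Hall's theorem, the natural substitute would be a direct matching argument, which is considerably harder. Hall's theorem is precisely what makes the nonzero-sum obstruction disappear after deleting one index, so I would not attempt that route.
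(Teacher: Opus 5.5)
Your proposal is correct and is exactly the paper's argument: set $b_n=(b_1\cdots b_{n-1})^{-1}$, apply Hall's theorem to $b_1,\dots,b_n$, and discard the last pair. One small remark: the identity $b_1\cdots b_n=1$ holds by this choice of $b_n$ even without commutativity, so the abelian hypothesis is actually used only inside Hall's theorem.
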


\begin{proof}
Put $b_n=(b_1\cdots b_{n-1})^{-1}$ and apply Hall's theorem to $b_1,\dots,b_n$. Discarding the last pair gives the claim.
\end{proof}

\begin{lemma}[Bridge lemma]\label{lem:bridge}
Let $P,Q$ be simplex points with $z(P)\ne z(Q)$. Then there is a simplex point $R$ such that both $\langle P,R\rangle$ and $\langle R,Q\rangle$ are simplex lines.
\end{lemma}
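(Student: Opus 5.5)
The plan is to construct $R$ coordinate by coordinate, with its zero at a third position, and to reduce both ratio conditions of Lemma~\ref{lem:ratio} to a single application of Lemma~\ref{lem:hall} in the cyclic group $A=\F_q^\times$ of order $n=q-1$.

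\textbf{Choice of the zero position and unknowns.} Write $P=\langle p\rangle$, $Q=\langle q\rangle$, $a=z(P)$ and $b=z(Q)$, so $a\neq b$. Since $|I|=q+1\ge 3$, we may fix a third position $c\in I\setminus\{a,b\}$ and put $J=I\setminus\{a,b,c\}$, so that $|J|=q-2=n-1$. We look for $R=\langle r\rangle$ with $r_c=0$ and all other $r_i\neq0$. Then $z(R)=c$ differs from both $a$ and $b$. In particular $R\neq P$ and $R\neq Q$, and the first requirement of Lemma~\ref{lem:ratio} holds for both pairs.

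\textbf{Translating the two ratio conditions.} By Lemma~\ref{lem:ratio}, $\langle P,R\rangle$ is a simplex line iff the $q-1$ values $r_i/p_i$ for $i\in J\cup\{b\}$ are pairwise distinct; they then automatically exhaust $\F_q^\times$. Here $p_i\neq0$ because $i\neq a$. Likewise, $\langle R,Q\rangle$ is a simplex line iff the values $q_i/r_i$ for $i\in J\cup\{a\}$ are pairwise distinct. Put $x_i=r_i/p_i$ for $i\in J$ and $b_i=p_i/q_i\in\F_q^\times$. Then $q_i/r_i=(x_ib_i)^{-1}$, so for $i\in J$ the second condition says that the values $x_ib_i$ are pairwise distinct.

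\textbf{The core step.} Apply Lemma~\ref{lem:hall} to $b_i$, $i\in J$ ($n-1$ elements of $A$). This gives pairwise distinct $x_i$, $i\in J$, with the products $x_ib_i$ also pairwise distinct. Set $r_i=x_ip_i$ for $i\in J$. Exactly one element $x_b$ of $\F_q^\times$ is missed by $\{x_i\}_{i\in J}$; set $r_b=x_bp_b$, which is nonzero since $p_b\neq0$. This completes the first condition. Exactly one element $y$ of $\F_q^\times$ is missed by $\{(x_ib_i)^{-1}\}_{i\in J}$; set $r_a=q_a/y$, which is nonzero since $q_a\neq0$. This completes the second condition.

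The coordinate $r_a$ is unconstrained by the first condition, because $a$ is excluded there, and $r_b$ is unconstrained by the second, because $b$ is excluded there. This decoupling is what makes the construction work. The only genuinely nontrivial input is the simultaneous distinctness on $J$, which is exactly what Lemma~\ref{lem:hall} supplies. For $q=2$ we have $J=\emptyset$ and the construction is trivial.
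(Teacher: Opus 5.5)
Your proposal is correct and follows essentially the same construction as the paper: a third zero position $c$, Lemma~\ref{lem:hall} applied in $\F_q^\times$ to the elements $p_i/q_i$ for $i\in J$, and the two leftover elements filling the decoupled coordinates $b$ and $a$. The only difference is cosmetic: you work with the ratios $q_i/r_i$ and set $r_a=q_a/y$, whereas the paper uses $r_i/q_i$ and sets $r_a=q_ay_a$.
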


\begin{proof}
Write $a=z(P)$ and $b=z(Q)$, choose $k\in I\setminus\{a,b\}$, and set $J=I\setminus\{a,b,k\}$, so $|J|=q-2$.
For $i\in J$, put $c_i=q_i/p_i\in\F_q^\times$. Apply Lemma~\ref{lem:hall} to the group $\F_q^\times$, which has order $q-1$, and to the $q-2$ elements $c_i^{-1}$. We obtain pairwise distinct $x_i\in\F_q^\times$ such that $y_i:=x_i c_i^{-1}$, $i\in J$, are pairwise distinct as well.

Let $x_b$ be the unique element of $\F_q^\times$ not among the $x_i$, and let $y_a$ be the unique element not among the $y_i$. Define $R=\langle r\rangle$ by $r_k=0$, $r_i=p_i x_i$ for $i\in J$, $r_b=p_bx_b$, and $r_a=q_ay_a$.
Then $R$ is a simplex point. Moreover,
\[
\left\{\frac{r_i}{p_i}:i\ne a,k\right\}
=\{x_i:i\in J\}\cup\{x_b\}=\F_q^\times,
\]
so $P$ and $R$ lie on a simplex line. Likewise,
\[
\left\{\frac{r_i}{q_i}:i\ne b,k\right\}
=\{y_i:i\in J\}\cup\{y_a\}=\F_q^\times,
\]
and $R,Q$ lie on a simplex line as well.
\end{proof}

\begin{proposition}\label{prop:upper}
For every prime power $q$, $\diam\Gs(2,q)\le3$.
\end{proposition}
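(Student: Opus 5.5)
The plan is to take two arbitrary simplex lines $L\ne M$ and build a path of length at most $3$ between them in $\Gs(2,q)$. If $L$ and $M$ meet, then the intersection point lies on a simplex line, so it has exactly one zero coordinate and is a simplex point. Hence $L$ and $M$ are adjacent and there is nothing to do. So we may assume that $L$ and $M$ are disjoint. For $q=2$ the graph has a single vertex, so this case does not arise there, and we may take $q\ge3$.

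The key step is to pick $P\in L$ and $Q\in M$ with $z(P)\ne z(Q)$. This is always possible. By the remark following Lemma~\ref{lem:ratio}, the $q+1$ points of $L$ realize every zero position exactly once, and the same holds for $M$. So we fix any $Q\in M$ and choose $P\in L$ whose zero position is any index different from $z(Q)$; such an index exists because $|I|=q+1\ge2$.

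Next we apply the Bridge Lemma~\ref{lem:bridge} to $P,Q$. It yields a simplex point $R$ such that $N_1=\langle P,R\rangle$ and $N_2=\langle R,Q\rangle$ are simplex lines. Then $L,N_1,N_2,M$ is a walk in which consecutive lines meet:
\begin{itemize}
\item $L\cap N_1\ni P$;
\item $N_1\cap N_2\ni R$;
\item $N_2\cap M\ni Q$.
\end{itemize}
Each of these intersection points is a simplex point. Whenever two consecutive lines are distinct, they are therefore adjacent. If some consecutive lines coincide (for instance $N_1=L$, or $N_1=N_2$), the walk only gets shorter. Thus $\dist(L,M)\le3$ in every case, which gives $\diam\Gs(2,q)\le3$.

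I expect no serious obstacle, since the real work is contained in Lemma~\ref{lem:bridge}. The only point needing care is the degenerate coincidences in the walk, for example $R\in L$ or $R\in M$, or $N_1=N_2$. Each of these only shortens the path, so the bound $3$ still holds. I would also note that the case of intersecting lines is exactly the adjacency case, and that $q=2$ is trivial.
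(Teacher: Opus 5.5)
Your proposal is correct and follows the paper's proof essentially verbatim. You choose $P\in L$ and $Q\in M$ with $z(P)\ne z(Q)$, apply the Bridge Lemma, and shorten the walk $L,\langle P,R\rangle,\langle R,Q\rangle,M$ by deleting repetitions; the preliminary split into meeting versus disjoint lines and the separate treatment of $q=2$ are harmless but unnecessary.
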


\begin{proof}
Let $L,M$ be simplex lines. Each simplex line contains exactly one point with its zero in each coordinate position. Choose $P\in L$ and $Q\in M$ with $z(P)\ne z(Q)$. By Lemma~\ref{lem:bridge} there is a simplex point $R$ such that $\langle P,R\rangle$ and $\langle R,Q\rangle$ are simplex lines. Hence $L\;\mathrel{\text{---}}\;\langle P,R\rangle\;\mathrel{\text{---}}\;\langle R,Q\rangle\;\mathrel{\text{---}}\;M$ is a walk of length at most $3$; after deleting repetitions, it contains a path from $L$ to $M$ of length at most $3$.
\end{proof}

\section{The lower bound for \texorpdfstring{$q\ge5$}{q >= 5}}\label{sec:lower-bound}
\paragraph{Moment criterion}
For simplex points $P=\langle p\rangle$ and $Q=\langle q\rangle$ and $1\le m\le q-2$, put $M_m(P,Q)=\sum_{i=0}^{q}p_i^{q-1-m}q_i^m$.
Its value depends on the chosen representatives, but its vanishing does not: replacing $p,q$ by non-zero scalar multiples multiplies $M_m(P,Q)$ by a non-zero scalar. Only the following necessary condition will be needed.

\begin{lemma}[Moment lemma]\label{lem:moments}
If $P$ and $Q$ lie on a simplex line, then $M_m(P,Q)=0$ for $1\le m\le q-2$.
\end{lemma}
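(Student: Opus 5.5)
We prove the Moment lemma with the Ratio criterion (Lemma~\ref{lem:ratio}) and the classical power sums over $\F_q^\times$. The key fact is that for an integer $e$,
\[
\sum_{x\in\F_q^\times} x^e=\begin{cases}-1,& (q-1)\mid e,\\ 0,&\text{otherwise.}\end{cases}
\]
It follows by taking a generator $g$ of the cyclic group $\F_q^\times$ and summing the geometric series $\sum_{j=0}^{q-2} g^{je}$.

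Suppose $P=\langle p\rangle$ and $Q=\langle q\rangle$ lie on a simplex line, and let $a=z(P)$ and $b=z(Q)$. By Lemma~\ref{lem:ratio} we have $a\ne b$. Also, the map $i\mapsto q_i/p_i$ is a bijection from $I\setminus\{a,b\}$ onto $\F_q^\times$, since $q-1$ indices give $q-1$ values that together form $\F_q^\times$.

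Now split $M_m(P,Q)=\sum_i p_i^{q-1-m}q_i^m$ according to the index. For $1\le m\le q-2$ both exponents $q-1-m$ and $m$ are positive, so the terms with $i=a$ (where $p_a=0$) and $i=b$ (where $q_b=0$) vanish. For every other $i$ we have $p_i\ne0$ and $p_i^{q-1}=1$, so the term equals
\[
p_i^{q-1}\Bigl(\frac{q_i}{p_i}\Bigr)^{m}=\Bigl(\frac{q_i}{p_i}\Bigr)^{m}.
\]
Using the bijection, this gives
\[
M_m(P,Q)=\sum_{x\in\F_q^\times}x^m .
\]
Since $1\le m\le q-2$, the exponent $m$ is not divisible by $q-1$, so the power-sum fact makes this sum zero.

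The only point needing care is the vanishing of the two degenerate terms. It uses $m\ge1$ and $q-1-m\ge1$ together with the convention $0^e=0$ for $e>0$, which is exactly why the range $1\le m\le q-2$ is imposed. The step $p_i^{q-1}=1$ for nonzero $p_i$ is what lets us pass to the ratios. The choice of representatives does not matter, as already noted before the statement.
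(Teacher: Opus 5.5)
Your proof is correct and follows essentially the same route as the paper's: discard the two vanishing terms at $z(P)$ and $z(Q)$, use $p_i^{q-1}=1$ to rewrite each remaining term as $(q_i/p_i)^m$, and invoke Lemma~\ref{lem:ratio} together with $\sum_{x\in\F_q^\times}x^m=0$. The extra details you supply — the geometric-series proof of the power-sum identity and the observation that the ratio map is a bijection onto $\F_q^\times$ because there are exactly $q-1$ indices — are left implicit in the paper.
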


\begin{proof}
Let $a=z(P)$ and $b=z(Q)$. The terms with indices $a,b$ vanish. For every remaining index,
\[
p_i^{q-1-m}q_i^m
=p_i^{q-1}\left(\frac{q_i}{p_i}\right)^m
=\left(\frac{q_i}{p_i}\right)^m.
\]
By Lemma~\ref{lem:ratio}, the ratios $q_i/p_i$ run through $\F_q^\times$. Hence $M_m(P,Q)=0$, since $\sum_{x\in\F_q^\times}x^m=0$ for $1\le m\le q-2$.
\end{proof}

Index the coordinates by $\F_q\cup\{\infty\}$ and consider the standard simplex line $L$ with points
\[
A(t)=\bigl((t-a)_{a\in\F_q},1\bigr),\qquad t\in\F_q,
\]
and $A(\infty)=(1,\dots,1,0)$. Thus the zero position of $A(t)$ is $t$. Let $\xi$ be a generator of $\F_q^\times$, choose a three-element set $S$ of coordinate positions, and put
\[
D=\operatorname{diag}(d_k)_{k\in\F_q\cup\{\infty\}},\qquad
 d_k=\begin{cases}\xi,&k\in S,\\1,&k\notin S.\end{cases}
\]
Set $M=D(L)$. The diagonal map preserves zero positions, so $M$ is again a simplex line. The lines $L$ and $M$ are disjoint: if $A(i)$ and $DA(j)$ represented the same projective point, then $i=j$, while $DA(i)$ cannot be a scalar multiple of $A(i)$ because among the coordinates different from $i$ there remain entries of $D$ equal to both $1$ and $\xi$.

\begin{proposition}\label{prop:qge5}
If $q\ge5$, then $L$ and $M$ are at distance $3$. Consequently, $\diam\Gs(2,q)=3$.
\end{proposition}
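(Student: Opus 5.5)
By Proposition~\ref{prop:upper} it suffices to show that $L$ and $M$ are neither equal, adjacent, nor at distance $2$. They are disjoint, as shown just before the statement, so they are distinct and not adjacent. By Lemma~\ref{lem:common-neighbor} it then remains to show that no pair $P\in L$, $Q\in M$ lies on a simplex line. I would rule this out with the moment criterion, Lemma~\ref{lem:moments}.

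Take $P=A(s)$ and $Q=DA(t)$ with $s,t\in\F_q\cup\{\infty\}$. By Lemma~\ref{lem:ratio} we may assume $s\ne t$. Write $p=A(s)$ and $a=A(t)$, so $Q$ has coordinates $d_ka_k$. Since $d_k^m=1+(\xi^m-1)[k\in S]$, we get
\[
M_m(P,Q)=M_m(A(s),A(t))+(\xi^m-1)\sum_{k\in S}p_k^{q-1-m}a_k^m .
\]
The first term vanishes by Lemma~\ref{lem:moments}, because $A(s)$ and $A(t)$ lie on the simplex line $L$. The factor $\xi^m-1$ is non-zero for $1\le m\le q-2$, since $\xi$ is a generator. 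In the remaining sum, indices $k\in\{s,t\}$ contribute $0$ because $p_s=0$ and $a_t=0$; note $m\ge1$ and $q-1-m\ge1$. For the other indices we have $p_k^{q-1}=1$. Putting $T=S\setminus\{s,t\}$ and $r_k=a_k/p_k\in\F_q^\times$, a common neighbour would therefore force
\[
\sum_{k\in T} r_k^m=0\qquad(1\le m\le q-2).
\]
Here $1\le|T|\le3$, since $|S|=3$ and $s\ne t$.

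The key observation is that the $r_k$, $k\in T$, are pairwise distinct. By Lemma~\ref{lem:ratio}, the ratios $a_k/p_k$ over all $k\ne s,t$ run through $\F_q^\times$ bijectively, because $A(s),A(t)$ span the simplex line $L$. So $R=\{r_k:k\in T\}$ is a set of size $n=|T|\in\{1,2,3\}$. Let $f$ be its indicator function on the cyclic group $\F_q^\times$. Then $f$ can be recovered from its power sums $S_m=\sum_{x\in R}x^m$ by discrete Fourier inversion:
\[
f(x)=-\sum_{m=0}^{q-2}x^{-m}S_m \quad\text{in }\F_q ,
\]
which uses $\sum_{m=0}^{q-2}y^m=-[y=1]$ for $y\in\F_q^\times$. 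If $S_m=0$ for $1\le m\le q-2$, then $f(x)\equiv -n$ for every $x$, so $f$ is constant modulo the characteristic. Since $q\ge5$ gives $q-1\ge4>3\ge n$, some $x$ has $f(x)=0$, hence every value of $f$ is $0$ modulo the characteristic. But $f$ takes values in $\{0,1\}$ and is not identically zero, a contradiction.

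I expect the main care to be needed in two places. First, the cases where $s$ or $t$ equals $\infty$, or where $s,t\in S$; these are handled uniformly by the formulation above. Second, the distinctness of the $r_k$ via Lemma~\ref{lem:ratio}. That distinctness is exactly what replaces Newton's identities, which fail in characteristics $2$ and $3$. Combining the above with Proposition~\ref{prop:upper} gives $\dist(L,M)=3$ and hence $\diam\Gs(2,q)=3$.
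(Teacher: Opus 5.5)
Your proposal is correct and is essentially the paper's moment proof: it uses the same splitting $M_m(A(s),DA(t))=(\xi^m-1)\sum_{k\in T}r_k^m$ and the same appeal to Lemma~\ref{lem:ratio} to get pairwise distinct $r_k$. The only difference is the final step: the paper uses just the moments $m=1,\dots,|T|$ and a Vandermonde determinant, whereas you use all moments $1\le m\le q-2$ and Fourier inversion on $\F_q^\times$ (which in fact shows that no non-empty proper subset of $\F_q^\times$ has all these power sums zero); in both, $q\ge5$ enters only through $|T|\le 3\le q-2$.
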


\begin{proof}[Moment proof]
Suppose first that $i\ne j$, and put
\[
u_k=\frac{A_k(j)}{A_k(i)},\qquad k\ne i,j.
\]
Since $A(i)$ and $A(j)$ lie on the simplex line $L$, Lemma~\ref{lem:ratio} gives
\[
\{u_k:k\ne i,j\}=\F_q^\times.
\]
For $1\le m\le q-2$, the corresponding moment for $A(i)$ and $DA(j)$ is therefore
\[
M_m\bigl(A(i),DA(j)\bigr)
=(\xi^m-1)\sum_{k\in S\setminus\{i,j\}}u_k^m. \tag{3.1}
\]
Indeed, the full sum of the $m$th powers of the elements of $\F_q^\times$ is zero, and only the coordinates in $S$ are changed by $D$.

Let $T=S\setminus\{i,j\}$ and $r=|T|$. Then $1\le r\le3$. Since $q\ge5$, the moments with $1\le m\le r$ are available, and since $\xi$ has order $q-1\ge4$, one has $\xi^m\ne1$ for these values of $m$. If all these moments vanished, then
\[
\sum_{k\in T}u_k^m=0,\qquad m=1,\dots,r.
\]
The elements $u_k$, $k\in T$, are distinct and non-zero. Hence the coefficient matrix $(u_k^m)_{1\le m\le r,\,k\in T}$ is a Vandermonde matrix multiplied by a non-singular diagonal matrix, and is therefore non-singular. It cannot annihilate the all-one vector. Thus at least one of the moments in (6.1) is non-zero.

By Lemma~\ref{lem:moments}, $A(i)$ and $DA(j)$ cannot lie on a simplex line. The case $i=j$ is excluded by Lemma~\ref{lem:ratio}, because the two points have the same zero position. Hence there is no collinear pair between $L$ and $M$. Lemma~\ref{lem:common-neighbor} shows that $L$ and $M$ have no common neighbor, so $\dist(L,M)\ge3$. Proposition~\ref{prop:upper} gives equality.
\end{proof}

\begin{proof}[Product proof]
For distinct $i,j\in\F_q\cup\{\infty\}$,
\[
\prod_{k\ne i,j}\frac{A_k(j)}{A_k(i)}=-1. \tag{3.2}
\]
For finite $i,j$ this follows from the derivative identity for $X^q-X$,
\[
\prod_{a\in\F_q\setminus\{x\}}(x-a)=-1.
\]
If $j=\infty$, the product in (6.2) is the reciprocal of this expression with $x=i$, and if $i=\infty$ it is this expression with $x=j$. Thus (6.2) holds in all cases.

If $A(i)$ and $DA(j)$ lay on a simplex line, with $i\ne j$, the $q-1$ ratios on their common non-zero coordinates would be the elements of $\F_q^\times$, whose product is $-1$ (with $-1=1$ in characteristic $2$). Since $\prod_k d_k=\xi^3$, identity (6.2) gives instead
\[
\prod_{k\ne i,j}\frac{d_kA_k(j)}{A_k(i)}
=-\frac{\xi^3}{d_id_j}.
\]
Equality of the two products would imply $d_id_j=\xi^3$. But $d_i,d_j\in\{1,\xi\}$, so $d_id_j\in\{1,\xi,\xi^2\}$, whereas $\xi^3\notin\{1,\xi,\xi^2\}$ because $\xi$ has order at least $4$. Thus no such pair is collinear. The case $i=j$ is again excluded by Lemma~\ref{lem:ratio}. Lemma~\ref{lem:common-neighbor} and Proposition~\ref{prop:upper} now give $\dist(L,M)=3$.
\end{proof}

\begin{remark}\label{rem:q4-failure}
Both arguments above genuinely require $q\ge5$. For $q=4$, a generator $\xi\in\F_4^\times$ has order $3$, so $\xi^3=1$. If $i,j$ are the two coordinate positions outside $S$, then $d_id_j=1=\xi^3$, and the product argument gives no contradiction. The moment argument also stops: here $q-2=2$, so only $M_1$ and $M_2$ are available, while $T=S$ has three elements. For the same choice of $i,j$, the ratios $u_k$, $k\in S$, are precisely the three elements of $\F_4^\times$, and hence
\[
\sum_{k\in S}u_k=\sum_{k\in S}u_k^2=0.
\]
Thus both available moments vanish. The case $q=4$ therefore needs a separate construction, given in Section~\ref{sec:small-fields}.
\end{remark}

\paragraph{Alternative counting proof for \texorpdfstring{$q\ge7$}{q >= 7}}
The following elementary counts will be used for the alternative counting proof. Set $r=(q-1)!$.

\begin{lemma}\label{lem:counts}
The number of simplex points is $N_q=(q+1)(q-1)^{q-1}$. Every simplex point lies on exactly $r$ simplex lines. Consequently, $|V(\Gs(2,q))|=(q-1)^{q-1}r$, and $\Gs(2,q)$ is regular of valency $d_q=(q+1)(r-1)$.
\end{lemma}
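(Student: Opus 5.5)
Counting simplex points is straightforward. A simplex point is determined by its zero position $z\in I$, which gives $q+1$ choices, together with a non-zero vector on the remaining $q$ coordinates taken up to scalars. That leaves $(q-1)^q/(q-1)=(q-1)^{q-1}$ choices. Hence $N_q=(q+1)(q-1)^{q-1}$.

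\textbf{Lines through a point.} Next we count the simplex lines through a fixed simplex point $P=\langle p\rangle$ with $z(P)=a$, using Lemma~\ref{lem:ratio}.
\begin{enumerate}
\item A second point $Q=\langle q\rangle$ on a simplex line through $P$ has some zero position $b\neq a$.
\item Normalize the representative of $Q$ so that $q_a=p_a$; since $q_a\neq0$, this representative is unique.
\item The ratios $q_i/p_i$ for $i\notin\{a,b\}$ must form a bijection onto $\F_q^\times$. That gives $(q-1)!$ choices for each $b$, and every such choice yields a simplex line $\langle P,Q\rangle$ by the converse direction of the lemma.
\item In total this produces $q\cdot r$ pairs $(b,Q)$.
\item Each simplex line through $P$ contains exactly one point with zero in each position $b\neq a$, by the remark after Lemma~\ref{lem:ratio}. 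So each line is counted exactly $q$ times.
\end{enumerate}
Therefore $P$ lies on exactly $r$ simplex lines.

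\textbf{Number of vertices.} We double count incident pairs (simplex point, simplex line). Every simplex line has exactly $q+1$ points, all of them simplex points. This gives $N_q\cdot r=(q+1)\,|V(\Gs(2,q))|$, and so $|V(\Gs(2,q))|=(q-1)^{q-1}r$.

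\textbf{Valency.} Fix a simplex line $L$. A neighbour $N$ of $L$ meets $L$ in exactly one point, and that point is a simplex point. Distinct lines meet in at most one point, so the neighbours through different points of $L$ are distinct. For each of the $q+1$ points of $L$ there are $r-1$ simplex lines through it other than $L$. Hence the valency is $(q+1)(r-1)$, independent of $L$, and $\Gs(2,q)$ is regular.

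The only step that needs care is item 3: checking that the ratio criterion gives a bijection between the choices of $Q$ and the permutations, including the normalization at coordinate $a$. Everything else is direct counting.
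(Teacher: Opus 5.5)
Your proof is correct in substance and follows the paper's route. You count the $q!$ points $Q$ that span a simplex line with $P$, divide by the $q$ such points on each line through $P$, and then double count incidences and neighbours exactly as the paper does. The only difference is that the paper first moves $P$ to $\langle 1,\dots,1,0\rangle$ and checks directly which $Q=(a_0,\dots,a_{q-1},1)$ work, while you apply Lemma~\ref{lem:ratio} to a general $P$; this is the same computation.

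There is one slip, at exactly the step you flagged. Since $a=z(P)$, you have $p_a=0$, so the normalization $q_a=p_a$ is impossible for a point with $z(Q)=b\neq a$. Replace it by $q_a=1$, which is precisely the paper's choice of last coordinate $1$. Because $q_a$ does not enter any ratio $q_i/p_i$ with $i\notin\{a,b\}$, the map $Q\mapsto(q_i/p_i)_{i\notin\{a,b\}}$ is then a bijection. It goes from the points $Q$ with $z(Q)=b$ spanning a simplex line with $P$ onto the bijections $I\setminus\{a,b\}\to\F_q^\times$, which gives your $(q-1)!$. With this correction, everything else goes through as written.
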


\begin{proof}
Fixing the zero position gives $(q-1)^q$ non-zero coordinate vectors, and projectivizing divides this number by $q-1$. Hence $N_q=(q+1)(q-1)^{q-1}$.

For the number of simplex lines through a point, permuting coordinates and rescaling the non-zero coordinates independently sends any simplex point to $P=\langle1,\dots,1,0\rangle$ and preserves simplex points and simplex lines. Every simplex line through $P$ has exactly $q$ further points; their last coordinate is non-zero, so each has a unique representative of the form $Q=(a_0,\dots,a_{q-1},1)$. The $q$ points $\langle Q+\lambda P\rangle$, $\lambda\in\F_q$, are simplex points precisely when $a_0,\dots,a_{q-1}$ are pairwise distinct, hence form a permutation of $\F_q$. Thus there are $q!$ possible points $Q$, and each line through $P$ contributes exactly $q$ of them. Therefore the number of such lines is $q!/q=r$.

The remaining formulas follow by double counting incidences. Since every simplex line contains $q+1$ simplex points, $|V(\Gs(2,q))|(q+1)=N_qr$, and hence $|V(\Gs(2,q))|=(q-1)^{q-1}r$. Finally, through each of the $q+1$ points of a fixed simplex line there are $r-1$ other simplex lines, and no neighboring line is counted twice. Hence $d_q=(q+1)(r-1)$.
\end{proof}

For completeness, we retain the independent counting proof for the range $q\ge7$.

\begin{proposition}\label{prop:largeq}
If $q\ge7$, then $\diam\Gs(2,q)=3$.
\end{proposition}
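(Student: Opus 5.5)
The plan is a counting argument. Combined with Proposition~\ref{prop:upper}, it suffices to show $\diam\Gs(2,q)\ge3$, i.e.\ to exhibit two simplex lines with no common neighbor and not adjacent. I will fix a simplex line $L$ and bound the number of vertices within distance $2$ of $L$ by a quantity depending on the valency. Then I will compare this with the total number of vertices from Lemma~\ref{lem:counts}. If the ball of radius $2$ around $L$ is strictly smaller than $|V(\Gs(2,q))|$, some vertex lies at distance at least $3$ from $L$, and the diameter is exactly $3$.

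For the bound I use the crude estimate $|B_2(L)|\le 1+d_q+d_q(d_q-1)=1+d_q^2$, where $d_q=(q+1)(r-1)$ and $r=(q-1)!$. So the key inequality to verify is
\[
1+(q+1)^2(r-1)^2<(q-1)^{q-1}r .
\]
Since $(r-1)^2<r^2$, it suffices to prove $(q+1)^2 r\le (q-1)^{q-1}$, that is, $(q+1)^2(q-1)!\le (q-1)^{q-1}$.

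This inequality is the main obstacle, since it fails for small $q$. I would handle it by writing
\[
(q-1)^{q-1}/(q-1)!=\prod_{k=1}^{q-1}\frac{q-1}{k}.
\]
This ratio grows roughly like $e^{q}/\sqrt{q}$, while $(q+1)^2$ grows only polynomially.

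First I check the base case $q=7$: $6^6=46656$ and $64\cdot720=46080$, so the inequality holds, though narrowly. Next I check monotonicity. Passing from $q$ to $q+1$ multiplies the ratio $(q-1)^{q-1}/(q-1)!$ by $(1+1/(q-1))^{q-1}\ge2$. Meanwhile $(q+1)^2$ is multiplied by $((q+2)/(q+1))^2<2$ for $q\ge2$. By induction the inequality therefore holds for all $q\ge7$. It does fail at $q=5$, since $4^4=256<36\cdot24$, which explains the restriction $q\ge7$.

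Finally I assemble the argument. For $q\ge7$, some simplex line $M$ satisfies $\dist(L,M)\ge3$. Here I use that $\Gs(2,q)$ is connected by Proposition~\ref{prop:upper}, so distances are finite. Proposition~\ref{prop:upper} then gives $\dist(L,M)=3$, and hence $\diam\Gs(2,q)=3$. No restriction to prime powers is needed beyond $q\ge7$ in the inequality, so this covers all prime powers $q\ge7$.
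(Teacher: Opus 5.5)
Your proposal is correct and is essentially the paper's proof: the same ball bound $1+d_q^2$, the same reduction to $(q+1)^2(q-1)!<(q-1)^{q-1}$, the same base case $q=7$ (ratio $46656/46080=81/80$), and the same inductive step comparing the factor $(1+1/(q-1))^{q-1}\ge2$ with $((q+2)/(q+1))^2<2$. The only difference is cosmetic: you exhibit a vertex outside $B_2(L)$ directly, whereas the paper assumes $\diam\le2$ and derives a contradiction.
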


\begin{proof}
Assume, towards a contradiction, that $\diam\Gs(2,q)\le2$. From a fixed vertex there is one vertex at distance $0$, at most $d_q$ vertices at distance $1$, and at most $d_q(d_q-1)$ vertices at distance $2$. Thus
\[
|V(\Gs(2,q))|\le 1+d_q+d_q(d_q-1)=1+d_q^2. \tag{3.3}
\]
We show that the opposite strict inequality holds for $q\ge7$.

Define $f(q)=(q-1)^{q-1}/((q+1)^2r)$. At $q=7$ we have $f(7)=81/80>1$. When $q$ is replaced by $q+1$, the corresponding value of $r$ is $qr$, and hence
\[
\frac{f(q+1)}{f(q)}
=\left(1+\frac1{q-1}\right)^{q-1}
 \left(\frac{q+1}{q+2}\right)^2
>2\left(\frac89\right)^2>1
\]
for every $q\ge7$. Therefore $(q-1)^{q-1}>(q+1)^2r$. Using Lemma~\ref{lem:counts},
\[
\begin{aligned}
|V(\Gs(2,q))|
  &=(q-1)^{q-1}r\\
  &>(q+1)^2r^2\\
  &>1+(q+1)^2(r-1)^2
   =1+d_q^2,
\end{aligned}
\]
contradicting (3.3). Hence the diameter is at least $3$, and Proposition~\ref{prop:upper} gives equality.
\end{proof}

\section{The small fields}\label{sec:small-fields}
The cases $q=2,3,4$ are included for completeness.

\paragraph{The cases \texorpdfstring{$q=2$ and $q=3$}{q = 2 and q = 3}}
For $q=2$ there is a unique binary simplex code of dimension $2$. Thus $\Gs(2,2)=K_1$ and its diameter is $0$.

\begin{proposition}\label{prop:q3}
The graph $\Gs(2,3)$ is isomorphic to $K_{4,4}$. In particular, $\diam\Gs(2,3)=2$.
\end{proposition}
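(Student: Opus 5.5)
For $q=3$, Lemma~\ref{lem:counts} gives $|V(\Gs(2,3))|=2^2\cdot 2=8$ vertices and valency $d_3=4\cdot(2-1)=4$. A $4$-regular graph on $8$ vertices is $K_{4,4}$ exactly when it is bipartite with both parts of size $4$, so the plan is to exhibit that bipartition. Work over $\F_3$ with $I=\{0,1,2,3\}$ and normalize a simplex point so that its first nonzero coordinate is $1$. Its remaining two nonzero coordinates are then $\pm1$. By Lemma~\ref{lem:ratio}, two simplex points $P,Q$ with distinct zero positions $a,b$ are collinear on a simplex line exactly when the two ratios $q_i/p_i$, $i\notin\{a,b\}$, are $1$ and $-1$. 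Equivalently, $\prod_{i\ne a,b} q_i/p_i=-1$.

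To get an invariant of a simplex line, I would use the product identity already used in the product proof of Proposition~\ref{prop:qge5}. For a simplex point $p$ with zero position $a$, put $\sigma(P)=\prod_{i\ne a}p_i\in\F_3^\times$; this is well defined because $(\lambda)^3=\lambda$ for $\lambda\in\F_3^\times$. Collinearity of $P$ and $Q$ on a simplex line means
\[
\frac{\sigma(Q)\,p_b}{\sigma(P)\,q_a}=-1.
\]
This alone is not yet a line invariant. So I would instead label each simplex line $L$ by a sign $\varepsilon(L)$ obtained from a fixed coordinate computation. The cleanest choice is to check directly that every simplex line is $D(L_0)$ or $D(L_0')$ for a diagonal $D$, where $L_0$ is the standard line $A(t)$ from Section~\ref{sec:lower-bound} and $L_0'$ is its image under a transposition of coordinates. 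I would then take $\varepsilon(L)$ to be the sign of $\det D$ times the sign of the permutation, up to the diagonal ambiguity, and verify that this is well defined.

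A more concrete route, which I would try first because $q=3$ is tiny, is direct enumeration. There are $N_3=4\cdot 4=16$ simplex points, each on $r=2$ lines, and $8$ lines, each containing $4$ points. Fix the standard line $L$. Its neighbors are the $4$ other lines through its $4$ points, one other line through each point. I would list these four lines explicitly and check two things: that they are pairwise disjoint, and that they share no point with each other. I would also check that the remaining $3$ lines, the non-neighbors of $L$, together with $L$ form a set of $4$ pairwise disjoint lines. Since $4$ disjoint lines of $4$ points each cover all $16$ simplex points, each point lies on exactly one line of each class, because it lies on exactly $2$ lines. So the $8$ lines split into two classes of $4$ pairwise disjoint lines, and lines in different classes always meet. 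That meeting follows from counting: a line meets exactly $4$ others, and it is disjoint from the $3$ others in its own class, so it must meet all $4$ lines of the opposite class.

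So the key structural claim is that \emph{the lines disjoint from a given line are pairwise disjoint}, and this is the main obstacle. I would settle it by the explicit computation of the $8$ lines, using Lemma~\ref{lem:ratio} to generate, for each point, its two lines: the second point with zero position $b$ has ratio vector a permutation of $\{1,-1\}$. The resulting structure gives $K_{4,4}$. Its diameter is $2$, since two distinct vertices in the same part are nonadjacent with a common neighbor.
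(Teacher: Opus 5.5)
\textbf{Gap: the crux is deferred, not proved.} Your counting reduction is sound. Suppose the three lines not adjacent to a fixed line $L$ are pairwise disjoint. Together with $L$ they then partition the $16$ simplex points. Every point lies on exactly one further line, which must be a neighbor of $L$, so the four neighbors are pairwise disjoint as well. Valency $4$ then forces each line to meet exactly the four lines of the other class. (The check is needed for one $L$ only; the neighbor class comes for free.)

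The problem is that the one claim carrying all the content---that the lines disjoint from $L$ are pairwise disjoint---is never proved. You defer it to a listing of the eight lines that you do not carry out. That listing would succeed, but it is the entire proof. The invariant $\varepsilon(L)$ is likewise left at ``verify that this is well defined.'' The property that would make it useful, namely that two distinct lines meet if and only if their labels differ, is neither stated nor checked. (Incidentally, $\sigma$ is not projectively invariant: $\sigma(\lambda p)=\lambda^3\sigma(p)=\lambda\sigma(p)$, so it is defined only through your normalization.) As written, the proposal replaces the proposition by an equivalent statement and stops there.

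\textbf{The paper's idea: triangle-freeness.} Three lines through one point are impossible, since only two simplex lines pass through a point. If three lines meet pairwise in distinct points $P,Q,R$, one can normalize to $P=(0,a,b,1)$, $Q=(c,0,d,1)$, $R=(e,f,0,1)$. Lemma~\ref{lem:ratio} then forces $d=-b$, $e=-c$, $f=-a$, so $P+Q+R=0$ and the three points are collinear, a contradiction. A triangle-free $4$-regular graph on $8$ vertices is then $K_{4,4}$ by a neighborhood count like yours.

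\textbf{Alternatively, finish your diagonal route.} Diagonal matrices modulo scalars form a group of order $8$ acting on simplex lines. The condition $D(L_0)=L_0$ forces $DA(\infty)\sim A(\infty)$ and $DA(0)\sim A(0)$, hence $D$ is scalar. So every one of the $8$ simplex lines is $D(L_0)$ for a unique class of $D$, and $L_0'$ is unnecessary. Two distinct lines $D(L_0)$ and $D'(L_0)$ meet if and only if $EA(i)\sim A(i)$ for some $i$, where $E=D'^{-1}D$ (zero positions force equal parameters). That happens exactly when three diagonal entries of $E$ coincide, and for non-scalar $E$ over $\F_3$ this is exactly $\det E=-1$. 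Since $\det(\lambda D)=\det D$, the value $\det D\in\F_3^\times$ gives the required bipartition into two classes of four.
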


\begin{proof}
Lemma~\ref{lem:counts} gives $8$ vertices and valency $4$; each simplex point lies on exactly two simplex lines. It remains to exclude triangles.

Three pairwise adjacent simplex lines cannot pass through one point, since only two simplex lines pass through any simplex point. Suppose instead that three distinct simplex lines form a triangle with distinct intersection points $P,Q,R$. Their zero positions are pairwise different. After a permutation of coordinates and a rescaling of representatives, write
\[
P=(0,a,b,1),\qquad Q=(c,0,d,1),\qquad R=(e,f,0,1),
\]
with $a,b,c,d,e,f\in\F_3^\times$. Since $P,Q$ lie on a simplex line, Lemma~\ref{lem:ratio} gives $d=-b$. The other two pairs similarly give $e=-c$ and $f=-a$. Hence $P+Q+R=0$, so $P,Q,R$ are projectively collinear, a contradiction. Thus $\Gs(2,3)$ is triangle-free.

Fix a vertex $L$. Its four neighbors form an independent set. Only three vertices remain. Each neighbor of $L$ has degree $4$, is adjacent to $L$, and cannot be adjacent to another neighbor of $L$; it must therefore be adjacent to all three remaining vertices. Hence $\Gs(2,3)\cong K_{4,4}$.
\end{proof}

\paragraph{The case \texorpdfstring{$q=4$}{q = 4}}
Kwiatkowski and Pankov determined the full distance relation for $q=4$ \cite{KP2020}; their description implies that the diameter is $3$. The following short moment proof gives this consequence directly.

Write $\F_4=\{0,1,\alpha,\alpha+1\}$, where $\alpha^2=\alpha+1$, and index the coordinates by $\F_4\cup\{\infty\}$ in the order $0,1,\alpha,\alpha+1,\infty$. Consider the lines $L,M$ parametrized by
\[
\begin{aligned}
A(t)&=(t,t+1,t+\alpha,t+\alpha+1,1), &&t\in\F_4,\\
A(\infty)&=(1,1,1,1,0),
\end{aligned}
\]
and
\[
\begin{aligned}
B(u)&=(u,u+1,\alpha u+1,\alpha u+\alpha+1,\alpha+1), &&u\in\F_4,\\
B(\infty)&=(1,1,\alpha,\alpha,0).
\end{aligned}
\]
Since $A(t)=A(0)+tA(\infty)$ and $B(u)=B(0)+uB(\infty)$, the two parametrizations describe projective lines. The zero position of $A(t)$ is $t$, while that of $B(u)$ is $u^2$; the points at infinity have zero position $\infty$. Hence every point on either line is a simplex point, and $B(s^2)$ and $A(s)$ have the same zero position.

The lines are disjoint. Every finite point of either line has non-zero last coordinate, whereas both points at infinity have last coordinate $0$, so a finite point cannot be projectively equal to a point at infinity. Among finite points, the sum of the coordinates of every $B(u)$ is $0$, whereas the corresponding sum for $A(t)$ is $1$; finally, $A(\infty)$ is not projectively equal to $B(\infty)$.

For $s,t\in\F_4$, direct calculation gives
\[
M_1\bigl(B(s^2),A(t)\bigr)=\alpha(s+t). \tag{4.1}
\]
This is non-zero when $s\ne t$. The pairs involving the infinite parameter satisfy
\[
M_1\bigl(B(s^2),A(\infty)\bigr)=\alpha,
\qquad
M_1\bigl(B(\infty),A(t)\bigr)=\alpha. \tag{4.2}
\]
Thus no point of $L$ is collinear with a point of $M$ having a different zero position, by Lemma~\ref{lem:moments}; equal zero positions are excluded by Lemma~\ref{lem:ratio}. Lemma~\ref{lem:common-neighbor} now shows that $L$ and $M$ have no common neighbor. Hence $\dist(L,M)\ge3$, and Proposition~\ref{prop:upper} gives $\diam\Gs(2,4)=3$.

\begin{proof}[Proof of Theorem~\ref{thm:main}]
Proposition~\ref{prop:upper} gives the upper bound $3$ for every $q$, while Proposition~\ref{prop:qge5} gives equality for every $q\ge5$. Section~\ref{sec:small-fields} gives the remaining cases $q=2,3,4$. These cases exhaust all prime powers, and therefore complete the proof of Theorem~\ref{thm:main}.
\end{proof}

\begin{remark}
For $q\ge4$, Theorem~\ref{thm:main} implies that $\Gs(2,q)$ is not isometrically embedded in the ambient Grassmann graph $\G_2(\F_q^{q+1})$, whose diameter is $2$. The graph $\Gs(2,3)\cong K_{4,4}$ is the known exceptional non-trivial two-dimensional case.
\end{remark}

\section*{Declaration of competing interest}
The author declares that they have no known competing financial interests or personal relationships that could have appeared to influence the work reported in this paper.

\section*{Data availability}
No data was used for the research described in the article.


\begin{thebibliography}{99}

\bibitem{CG2024}
I.~Cardinali, L.~Giuzzi,
Grassmannians of codes,
\emph{Finite Fields Appl.} 94 (2024), 102342.
\newblock doi:10.1016/j.ffa.2023.102342.

\bibitem{CGK2021}
I.~Cardinali, L.~Giuzzi, M.~Kwiatkowski,
On the Grassmann graph of linear codes,
\emph{Finite Fields Appl.} 75 (2021), 101895.
\newblock doi:10.1016/j.ffa.2021.101895.

\bibitem{Hall1952}
M.~Hall, Jr.,
A combinatorial problem on abelian groups,
\emph{Proc. Amer. Math. Soc.} 3 (1952), 584--587.
\newblock doi:10.1090/S0002-9939-1952-0050579-7.

\bibitem{KP2016}
M.~Kwiatkowski, M.~Pankov,
On the distance between linear codes,
\emph{Finite Fields Appl.} 39 (2016), 251--263.
\newblock doi:10.1016/j.ffa.2016.02.004.

\bibitem{KP2020}
M.~Kwiatkowski, M.~Pankov,
The graph of 4-ary simplex codes of dimension 2,
\emph{Finite Fields Appl.} 67 (2020), 101709.
\newblock doi:10.1016/j.ffa.2020.101709.

\bibitem{KP2024}
M.~Kwiatkowski, M.~Pankov,
On maximal cliques in the graph of simplex codes,
\emph{J. Geom.} 115 (2024), Article 10.
\newblock doi:10.1007/s00022-023-00709-y.

\bibitem{KPP2018}
M.~Kwiatkowski, M.~Pankov, A.~Pasini,
The graphs of projective codes,
\emph{Finite Fields Appl.} 54 (2018), 15--29.
\newblock doi:10.1016/j.ffa.2018.07.003.

\bibitem{KPT2026}
M.~Kwiatkowski, M.~Pankov, A.~Tyc,
One class of maximal cliques in the collinearity graphs of geometries related to simplex codes,
\emph{Finite Fields Appl.} 111 (2026), 102784.
\newblock doi:10.1016/j.ffa.2025.102784.

\bibitem{Pankov2023}
M.~Pankov,
The graphs of non-degenerate linear codes,
\emph{J. Combin. Theory Ser. A} 195 (2023), 105720.
\newblock doi:10.1016/j.jcta.2022.105720.

\bibitem{PankovNotes2026}
M.~Pankov,
\emph{Notes on the graphs of non-degenerate linear codes},
preliminary version of a SpringerBriefs manuscript, Section~10.3 (2026).

\end{thebibliography}
\end{document}